\newcommand{\mb}[1]{{\textbf {\textit#1}}}
\newtheorem{theorem}{Theorem}
\newtheorem{proposition}{Proposition}
\begin{document}

\title[On manifolds defined by 4-colourings of simple 3-polytopes]{On manifolds defined by 4-colourings\\ of simple 3-polytopes}

\author[V. Buchstaber]{Victor Buchstaber}
\address{Steklov Mathematical Institute, Russian Academy of
Sciences, Gubkina str.~8, 119991 Moscow, RUSSIA}
\email{buchstab@mi.ras.ru}

\author[T. Panov]{Taras Panov}
\address{Department of Mathematics and Mechanics, Moscow
State University, Leninskie Gory, 119991 Moscow, RUSSIA,
\newline\indent Institute for Theoretical and Experimental Physics,
Moscow, \quad \emph{and}
\newline\indent Institute for Information Transmission Problems,
Russian Academy of Sciences} 
\email{tpanov@mech.math.msu.su}
\urladdr{http://higeom.math.msu.su/people/taras/}

\date{}

\subjclass[2010]{Primary: 57R91, 57M50; secondary: 05C15, 14M25, 52A55, 52B10}

\maketitle

\makeatletter
\renewcommand{\@makefnmark}{}
\makeatother
\footnotetext{The work was carried out at the Steklov Institute of Mathematics and
supported by the Russian Science Foundation grant no.~14-11-00414.}

Let $\mathcal P$ be the class of combinatorial $3$-dimensional simple polytopes $P$, different from a tetrahedron, without $3$- and $4$-belts of facets. By the results of Pogorelov~\cite{pogo67} and Andreev~\cite{andr70}, a polytope $P$ admits a realisation in Lobachevsky space $\mathbb L^3$ with right dihedral angles if and only if $P\in\mathcal P$. We consider two families of smooth manifolds defined by regular $4$-colourings of polytopes $P\in\mathcal P$: six-dimensional quasitoric manifolds over $P$ and three-dimensional small covers of~$P$; the latter are also known as three-dimensional hyperbolic manifolds of L\"obell type~\cite{vesn87}. We prove that two manifolds from either of the families are diffeomorphic if and only if the corresponding $4$-colourings are equivalent.

A \emph{quasitoric manifold} (respectively, a \emph{small over}) over a simple $n$-polytope $P$ is a  $2n$-dimensional ($n$-dimensional) smooth manifold $M$ with a locally standard action of the torus~$T^n$ (the group $\mathbb Z_2^n$) and a projection $M\to P$ whose fibres are the orbits of the action, see~\cite{da-ja91},~\cite{bu-pa15}. 

Let $\mathcal F=\{F_1,\ldots,F_m\}$ be the set of facets of a simple $3$-polytope~$P$. A \emph{characteristic function} over $\mathbb Z$ (over $\mathbb Z_2$) is a map $\lambda\colon\mathcal F\to\mathbb Z^n$ ($\lambda\colon\mathcal F\to\mathbb Z_2^n$) satisfying the condition: if $F_{i_1}\cap F_{i_2}\cap F_{i_3}$ is a vertex, then $\lambda(F_{i_1}),\lambda(F_{i_2}),\lambda(F_{i_3})$ is a basis of the lattice~$\mathbb Z^n$ (of the space~$\mathbb Z_2^n$).  Characteristic functions $\lambda$ and $\lambda'$ are \emph{equivalent} ($\lambda\sim\lambda'$) if one is obtained from the other by a change of basis in~$\mathbb Z^n$ and changing the direction of some of the vectors $\lambda(F_i)$ to the opposite (by a change of basis in~$\mathbb Z_2^n$). \emph{Characteristic pairs} $(P,\lambda)$ and $(P',\lambda')$ are \emph{equivalent} ($(P,\lambda)\sim(P',\lambda')$) if $P$ and $P'$ are combinatorially equivalent ($P\simeq P'$) and $\lambda\sim\lambda'$.

Every quasitoric manifold (small cover) $M$ over $P$ is defined by a characteristic pair $(P,\lambda)$; with two such manifolds $M=M(P,\lambda)$ and $M'=M'(P',\lambda')$ being equivariantly homeomorphic if and only if $(P,\lambda)\sim(P',\lambda')$ (see~\cite{da-ja91}, \cite[Prop.~7.3.11]{bu-pa15}). In general, there exist non-equivalent pairs $(P,\lambda)$ and $(P',\lambda')$ whose corresponding manifolds $M$ and $M'$ are (non-equivariantly) diffeomorphic.

A (regular) \emph{$4$-colouring} of a simple polytope $P$ is a map $\chi\colon\mathcal F\to\{1,2,3,4\}$ such that $\chi(F_i)\ne\chi(F_j)$ whenever $F_i\cap F_j\ne\varnothing$. Such a $4$-colouring always exists by the Four Colour Theorem. Two $4$-colourings $\chi$ and $\chi'$ are \emph{equivalent} ($\chi\sim\chi'$) if $\chi'=\sigma\chi$ for a permutation~$\sigma\in S_4$.

Let $\chi$ be a $4$-colouring, $\mb a_1,\mb a_2,\mb a_3$ a basis in $\mathbb Z^3$, and $\varepsilon_i=\pm1$, $i=1,2,3$. These data define a characteristic function
$\lambda=\lambda(\chi,\mb a_1,\mb a_2,\mb a_3,\varepsilon_1,\varepsilon_2,\varepsilon_3)$ given by 
\[
  \lambda(F)=\begin{cases}
     \mb a_i&\text{if }\chi(F)=i,\quad i=1,2,3,\\
     \varepsilon_1\mb a_1+\varepsilon_2\mb a_2+\varepsilon_3\mb a_3    
  &\text{if }\chi(F)=4.
  \end{cases}
\]
Denote by $\mb e_1,\mb e_2,\mb e_3$ the standard basis $(1,0,0),(0,1,0),(0,0,1)$ in~$\mathbb Z^3$ or~$\mathbb Z_2^3$.

\begin{proposition} We have
$\lambda(\chi,\mb a_1,\mb a_2,\mb a_3,\varepsilon_1,\varepsilon_2,\varepsilon_3)\sim\lambda(\chi,\mb e_1,\mb e_2,\mb e_3,1,1,1)$.
\end{proposition}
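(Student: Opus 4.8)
The plan is to exhibit an explicit chain of the two permitted operations --- a change of basis in $\mathbb Z^3$ together with per-facet sign reversals --- joining the two characteristic functions. I would split the argument into two reductions: first normalise the basis $\mb a_1,\mb a_2,\mb a_3$ to the standard one, and then dispose of the signs $\varepsilon_i$.

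For the first reduction, since $\mb a_1,\mb a_2,\mb a_3$ is a basis of $\mathbb Z^3$, there is a unique $A\in GL_3(\mathbb Z)$ with $A\mb a_i=\mb e_i$ for $i=1,2,3$. Applying $A$ to every value sends $\lambda(F)=\mb a_i$ to $\mb e_i$ on the facets of colour $i$, and $\lambda(F)=\varepsilon_1\mb a_1+\varepsilon_2\mb a_2+\varepsilon_3\mb a_3$ to $\varepsilon_1\mb e_1+\varepsilon_2\mb e_2+\varepsilon_3\mb e_3$ on the facets of colour~$4$. Thus
\[
  \lambda(\chi,\mb a_1,\mb a_2,\mb a_3,\varepsilon_1,\varepsilon_2,\varepsilon_3)\sim\lambda(\chi,\mb e_1,\mb e_2,\mb e_3,\varepsilon_1,\varepsilon_2,\varepsilon_3),
\]
and it remains to connect the right-hand side to $\lambda(\chi,\mb e_1,\mb e_2,\mb e_3,1,1,1)$.

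For the second reduction I would take the diagonal change of basis $D=\mathrm{diag}(\varepsilon_1,\varepsilon_2,\varepsilon_3)$, which lies in $GL_3(\mathbb Z)$ because each $\varepsilon_i=\pm1$. Applying $D$ to $\lambda(\chi,\mb e_1,\mb e_2,\mb e_3,1,1,1)$ turns the colour-$i$ value $\mb e_i$ into $\varepsilon_i\mb e_i$ and the colour-$4$ value $\mb e_1+\mb e_2+\mb e_3$ into $\varepsilon_1\mb e_1+\varepsilon_2\mb e_2+\varepsilon_3\mb e_3$. On the colour-$4$ facets this already agrees with $\lambda(\chi,\mb e_1,\mb e_2,\mb e_3,\varepsilon_1,\varepsilon_2,\varepsilon_3)$, whereas on the colour-$i$ facets the two functions differ only by the sign $\varepsilon_i$. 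Reversing the direction of every facet whose colour $i$ satisfies $\varepsilon_i=-1$ --- a legitimate per-facet sign change --- then corrects these values while leaving the colour-$4$ facets untouched. This gives $\lambda(\chi,\mb e_1,\mb e_2,\mb e_3,\varepsilon_1,\varepsilon_2,\varepsilon_3)\sim\lambda(\chi,\mb e_1,\mb e_2,\mb e_3,1,1,1)$, and composing with the first reduction completes the proof.

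The only point that genuinely requires care --- and the sole place where the argument could go wrong --- is the interaction between the diagonal change of basis, which acts on all four colours simultaneously, and the sign reversals, which I wish to confine to colours $1,2,3$. The scheme works precisely because $D$ reproduces the signs $\varepsilon_i$ on the colour-$4$ facets exactly as needed, so that after applying $D$ no further adjustment is required there; the residual discrepancies live on the colour-$i$ facets, where they are uniformly of sign $\varepsilon_i$ and hence removable by flipping all facets of those colours. I would finally note that the flips are well defined facet by facet (all facets of a given colour carry the same value) and that every intermediate map is automatically a characteristic function, since both permitted operations carry bases to bases.
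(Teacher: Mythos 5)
Your proof is correct and takes essentially the same route as the paper's: first a change of basis carrying $\mb a_i$ to $\mb e_i$, then the diagonal change of basis $\mathrm{diag}(\varepsilon_1,\varepsilon_2,\varepsilon_3)$ combined with sign reversals on the facets of colours $1,2,3$ — this is exactly the paper's chain $\lambda(\chi,\mb a_1,\mb a_2,\mb a_3,\varepsilon_1,\varepsilon_2,\varepsilon_3)\sim\lambda(\chi,\mb e_1,\mb e_2,\mb e_3,\varepsilon_1,\varepsilon_2,\varepsilon_3)\sim\lambda(\chi,\varepsilon_1\mb e_1,\varepsilon_2\mb e_2,\varepsilon_3\mb e_3,1,1,1)\sim\lambda(\chi,\mb e_1,\mb e_2,\mb e_3,1,1,1)$, only written out in more explicit detail.
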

\begin{proof}\sloppy
We have
$ \lambda(\chi,\mb a_1,\mb a_2,\mb a_3,\varepsilon_1,\varepsilon_2,\varepsilon_3)\sim
  \lambda(\chi,\mb e_1,\mb e_2,\mb e_3,\varepsilon_1,\varepsilon_2,\varepsilon_3)
  \sim\lambda(\chi,\varepsilon_1\mb e_1,\varepsilon_2\mb e_2,\varepsilon_3\mb e_3,1,1,1)\sim
  \lambda(\chi,\mb e_1,\mb e_2,\mb e_3,1,1,1)$,
where the first and third equivalences come from a change of basis in~$\mathbb Z^3$, while the second equivalence comes from a change of the direction of vectors.
\end{proof}

Note that the equivalence classes of characteristic functions are the orbits of the group $\mathrm{GL}_3(\mathbb Z)$ or $\mathrm{GL}_3(\mathbb Z_2)$, while the equivalence classes of $4$-colourings are the orbits of the symmetric group~$S_4$. Nevertheless, we have

\begin{proposition}
$\chi\sim\chi'\;\Leftrightarrow\;\lambda_\chi\sim\lambda_{\chi'}$, where
$\lambda_\chi:=\lambda(\chi,\mb e_1,\mb e_2,\mb e_3,1,1,1)$
\end{proposition}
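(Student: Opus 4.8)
The plan is to reduce everything to the behaviour of the four distinguished vectors $\mb a_1=\mb e_1$, $\mb a_2=\mb e_2$, $\mb a_3=\mb e_3$, $\mb a_4=\mb e_1+\mb e_2+\mb e_3$, since $\lambda_\chi(F)=\mb a_{\chi(F)}$ and the equivalence of characteristic functions is generated by the $\mathrm{GL}_3(\mathbb Z)$-action (or $\mathrm{GL}_3(\mathbb Z_2)$-action) together with sign changes. Thus the whole statement is really a statement about the symmetry group of the configuration $\{\mb a_1,\mb a_2,\mb a_3,\mb a_4\}$.

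For the implication $\chi\sim\chi'\Rightarrow\lambda_\chi\sim\lambda_{\chi'}$, I would first reduce to the case $\chi'=\sigma\chi$ with $\sigma$ a transposition, as these generate~$S_4$. The key observation is that although $\mb a_4$ looks special (being the sum of the other three), the configuration becomes fully symmetric after a single sign change: setting $\mb c_i=\mb a_i$ for $i=1,2,3$ and $\mb c_4=-\mb a_4$ gives $\mb c_1+\mb c_2+\mb c_3+\mb c_4=\mb 0$, with any three of the $\mb c_i$ forming a basis of $\mathbb Z^3$. Consequently every $\sigma\in S_4$ extends to a unique $g\in\mathrm{GL}_3(\mathbb Z)$ with $g(\mb c_i)=\mb c_{\sigma(i)}$: it sends the basis $\mb c_1,\mb c_2,\mb c_3$ to the basis $\mb c_{\sigma(1)},\mb c_{\sigma(2)},\mb c_{\sigma(3)}$, and it respects the relation because the four images again sum to zero. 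Translating back through $\mb a_4=-\mb c_4$ turns this into $g(\mb a_i)=\varepsilon_i\mb a_{\sigma(i)}$ for suitable signs $\varepsilon_i=\pm1$, which is exactly a change of basis followed by sign changes carrying $\lambda_\chi$ to $\lambda_{\chi'}$. Over $\mathbb Z_2$ no signs are needed and the same $g$ works verbatim.

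For the converse $\lambda_\chi\sim\lambda_{\chi'}\Rightarrow\chi\sim\chi'$, I would start from a presentation $\lambda_{\chi'}(F)=\varepsilon_F\,g(\lambda_\chi(F))$ with $g\in\mathrm{GL}_3(\mathbb Z)$ and signs $\varepsilon_F=\pm1$. The crucial point here is that $\mb a_1,\dots,\mb a_4$ have non-negative coordinates and are pairwise non-proportional, so no one of them is the negative of another. Hence whenever $\chi(F)=\chi(F')$ the vectors $\lambda_{\chi'}(F)$ and $\lambda_{\chi'}(F')$ agree up to sign and both lie in $\{\mb a_1,\dots,\mb a_4\}$, which forces $\chi'(F)=\chi'(F')$. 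Therefore $\sigma\colon\chi(F)\mapsto\chi'(F)$ is a well-defined map on the colours used by~$\chi$; applying the same argument to $g^{-1}$ shows it is injective and that the colour sets used by $\chi$ and $\chi'$ have equal size, so $\sigma$ is a bijection between them and extends to a permutation in $S_4$ with $\chi'=\sigma\chi$. (Over $\mathbb Z_2$ the signs disappear and this step is immediate.)

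I expect the only real content to lie in the forward implication, precisely in realising the transpositions that move colour~$4$: a priori that colour is distinguished, and it is not obvious that interchanging it with, say, colour~$1$ can be effected by a lattice automorphism. The symmetrising substitution $\mb c_4=-\mb a_4$ is the device that removes this asymmetry and reduces all of $S_4$ to ordinary permutations of a zero-sum frame; once it is in place, both directions are short and the sign bookkeeping is routine.
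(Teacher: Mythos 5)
Your argument is correct and is in substance the paper's own proof: the forward direction rests on the same key fact---that the frame $\{\mb e_1,\mb e_2,\mb e_3,\mb e_1+\mb e_2+\mb e_3\}$ is permuted, up to signs, by lattice automorphisms (the paper expresses $\mb e_{\sigma(4)}$ as a $\pm1$-combination of the other three and invokes Proposition~1, whereas you construct the automorphism directly from the zero-sum frame $\mb c_1,\ldots,\mb c_4$)---and the converse in both cases compares the four-element images to read off the permutation. Your converse is marginally more careful than the paper's in that it builds $\sigma$ colour class by colour class, which also covers colourings that do not use all four colours.
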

\begin{proof}
Assume that $\chi'=\sigma\chi$, $\sigma\in S_4$. Denote $\mb e_4:=\mb e_1+\mb e_2+\mb e_3$. We have $\mb e_{\sigma(4)}=\varepsilon_1\mb e_{\sigma(1)}+\varepsilon_2\mb e_{\sigma(2)}+\varepsilon_3\mb e_{\sigma(3)}$ for some $\varepsilon_i=\pm1$.  Then
$\lambda_{\chi'}=\lambda(\chi',\mb e_1,\mb e_2,\mb e_3,1,1,1)=
  \lambda(\chi,\mb e_{\sigma(1)},\mb e_{\sigma(2)},\mb e_{\sigma(3)},\varepsilon_1,\varepsilon_2,
  \varepsilon_3)\sim\lambda_\chi$,
where the equivalence follows from Proposition~1.

\sloppy Conversely, assume $\lambda_\chi\sim\lambda_{\chi'}$. By the definition of characteristic functions, we have
$\lambda_{\chi'}=\lambda(\chi,\mb a_1,\mb a_2,\mb a_3,\varepsilon_1,\varepsilon_2,\varepsilon_3)$ for some basis $\mb a_1,\mb a_2,\mb a_3$ and $\varepsilon_i=\pm1$. The image of $\lambda_{\chi'}$ is the set $\{\mb e_1,\mb e_2,\mb e_3,\mb e_1+\mb e_2+\mb e_3\}$, while the image of $\lambda(\chi,\mb a_1,\mb a_2,\mb a_3,\varepsilon_1,\varepsilon_2,\varepsilon_3)$ is the set $\{\mb a_1,\mb a_2,\mb a_3$,     $\varepsilon_1\mb a_1+\varepsilon_2\mb a_2+\varepsilon_3\mb a_3\}$. Therefore, these two sets of $4$ vectors coincide, that is, $\mb a_1=\mb e_{\sigma(1)}$, $\mb a_2=\mb e_{\sigma(2)}$, $\mb a_3=\mb e_{\sigma(3)}$ and $\varepsilon_1\mb a_1+\varepsilon_2\mb a_2+\varepsilon_3\mb a_3=\mb e_{\sigma(4)}$ for some $\sigma\in S_4$. Thus, $\chi'=\sigma\chi$ and $\chi\sim\chi'$.
\end{proof}

\begin{theorem}[\cite{b-e-m-p-p}] 
Let $M=M(P,\lambda)$ and $M'=M'(P',\lambda')$ be quasitoric manifolds (or small covers), an assume that $P$ belongs to the class $\mathcal P$. Then $M$ and $M'$ are diffeomorphic if and only if the characteristic pairs $(P,\lambda)$ and $(P',\lambda')$ are equivalent.
\end{theorem}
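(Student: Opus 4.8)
The plan is to deduce the theorem from cohomological rigidity. The ``if'' direction is immediate: if $(P,\lambda)\sim(P',\lambda')$ then, by the Davis--Januszkiewicz result quoted above, $M$ and $M'$ are equivariantly homeomorphic, and since both carry the canonical smooth structure coming from their construction as quotients, they are in fact diffeomorphic. So the whole content is the ``only if'' direction, and I would prove the stronger assertion that already a graded ring isomorphism of cohomology forces $(P,\lambda)\sim(P',\lambda')$.

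First I would pass from topology to algebra. A diffeomorphism $M\xrightarrow{\sim}M'$ induces a graded ring isomorphism $H^*(M';R)\cong H^*(M;R)$, with $R=\mathbb Z$ for quasitoric manifolds and $R=\mathbb Z_2$ for small covers. Recall that $H^*(M;R)=R[v_1,\dots,v_m]/(\mathcal I_P+\mathcal J_\lambda)$, where $\deg v_i=2$ (resp.\ $1$), $\mathcal I_P$ is the Stanley--Reisner ideal of the boundary of the dual of $P$, and $\mathcal J_\lambda$ is the linear ideal read off from~$\lambda$. The first key point is that $P\in\mathcal P$ has no $3$-belt, so its dual simplicial complex is \emph{flag}; hence $\mathcal I_P$ is generated by the quadratic monomials $v_iv_j$ with $F_i\cap F_j=\varnothing$. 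Thus the entire combinatorics of $P$ is encoded in the facet-adjacency graph, i.e.\ in which products $v_iv_j$ vanish, and by Steinitz's theorem this $3$-connected planar graph determines the combinatorial type of the simple $3$-polytope $P$.

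The heart of the argument is to recover the facet classes $v_1,\dots,v_m$ intrinsically from the ring $H^*(M;R)$, in a manner preserved by an arbitrary graded isomorphism. I would characterise them ring-theoretically --- up to sign and up to the linear relations of $\mathcal J_\lambda$ --- and this is exactly where the hypotheses on $\mathcal P$ enter: the absence of $4$-belts rigidifies the quadratic part of the ring (it controls $H^4$, equivalently the relations among the squarefree quadratic monomials) and rules out spurious configurations of classes that would mimic a different facet structure. Granting such a characterisation, the isomorphism must carry $\{\pm v_i'\}$ onto $\{\pm v_j\}$, preserve the vanishing patterns of the $v_iv_j$, and hence induce an isomorphism of facet-adjacency graphs, giving $P\simeq P'$. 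The induced bijection of facet classes then carries $\mathcal J_{\lambda'}$ to $\mathcal J_\lambda$, which unwinds precisely to a change of basis in the target lattice together with sign changes of the vectors $\lambda(F_i)$, i.e.\ $\lambda\sim\lambda'$; combining, $(P,\lambda)\sim(P',\lambda')$.

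I expect the main obstacle to be this intrinsic characterisation of facet classes and its invariance under all ring automorphisms: producing a property phrased purely in terms of multiplication (for instance via annihilators, or via the list of classes $w$ with $v\cdot w\neq 0$) that singles out the $v_i$, and showing --- using precisely the no-$3$- and no-$4$-belt conditions --- that no other collection of classes satisfies it. For the small covers the same scheme runs over $\mathbb Z_2$; here one may additionally exploit that L\"obell manifolds are aspherical hyperbolic $3$-manifolds, so that the $\mathbb Z_2$-cohomology ring (through $\pi_1$ and Mostow rigidity) is an especially strong invariant, but the combinatorial extraction of $P$ and $\lambda$ proceeds exactly as above.
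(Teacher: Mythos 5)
The paper contains no proof of this statement: it is Theorem~1, imported verbatim from the reference~\cite{b-e-m-p-p}, so there is no internal argument to compare against. Your outline correctly reconstructs the strategy of that reference: cohomological rigidity via the presentation $H^*(M;R)=R[v_1,\dots,v_m]/(\mathcal I_P+\mathcal J_\lambda)$, flagness of the dual complex coming from the absence of $3$-belts, recovery of the facet-adjacency graph (hence of $P$, by Steinitz) from the vanishing quadratic monomials, and the role of the no-$4$-belt condition in controlling the product $H^2\otimes H^2\to H^4$. The ``if'' direction is also fine: equivalent characteristic pairs give equivariantly homeomorphic, indeed weakly equivariantly diffeomorphic, manifolds by the Davis--Januszkiewicz and Buchstaber--Panov constructions.

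Nevertheless, what you have written is a plan, not a proof, and the gap you yourself flag is the theorem. The entire mathematical content lies in the step you defer with ``granting such a characterisation'': an intrinsic, automorphism-invariant description of the facet classes $\{\pm v_i\}$ inside the ring, together with the verification that the no-$3$-belt and no-$4$-belt hypotheses exclude every other family of degree-two classes with the same multiplicative behaviour. In \cite{b-e-m-p-p} this occupies the bulk of the argument (analysis of annihilators, a notion of rigid elements, and a delicate combinatorial study of belts of facets); without it your argument establishes only the easy direction. Two further cautions: the classes $v_i$ live in the quotient by the linear ideal $\mathcal J_\lambda$, so one must characterise them only up to that ambiguity and then still show that the induced correspondence of linear ideals unwinds to $\lambda\sim\lambda'$, which requires an argument, not just bookkeeping; and for small covers a $\mathbb Z_2$-cohomology ring isomorphism does not by itself yield an isomorphism of fundamental groups, so Mostow rigidity cannot be invoked as a shortcut at that stage --- the combinatorial extraction of $(P,\lambda)$ from the ring is unavoidable there too.
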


\begin{theorem}[main result]
Assume given a polytope  $P\in\mathcal P$ with a $4$-colouring~$\chi$, and let $P'$ be another simple $3$-polytope with a $4$-colouring~$\chi'$. Then the $6$-\-di\-men\-si\-o\-nal quasitoric manifolds (or 
$3$-dimensional hyperbolic manifolds of L\"obell type) $M=M(P,\lambda_\chi)$ and $M'=M'(P',\lambda_{\chi'})$ are diffeomorphic if and only if $P\simeq P'$ and $\chi\sim\chi'$.
\end{theorem}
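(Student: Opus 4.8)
The plan is to deduce the statement directly from Theorem~1 and Proposition~2, using the combinatorial equivalence $P\simeq P'$ as the bridge between them. Both $M$ and $M'$ are built from the distinguished characteristic functions $\lambda_\chi$ and $\lambda_{\chi'}$ attached to the colourings, so the whole content reduces to showing that the characteristic-pair equivalence $(P,\lambda_\chi)\sim(P',\lambda_{\chi'})$ appearing in Theorem~1 coincides with the conjunction of $P\simeq P'$ and $\chi\sim\chi'$.

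First I would treat the ``if'' direction. Assuming $P\simeq P'$ and $\chi\sim\chi'$, I fix a combinatorial equivalence identifying the facet sets of $P$ and $P'$ and use it to regard $\chi'$ as a colouring of $P$, so that $\chi'=\sigma\chi$ for some $\sigma\in S_4$. Proposition~2 then gives $\lambda_\chi\sim\lambda_{\chi'}$, and together with $P\simeq P'$ this is exactly the assertion $(P,\lambda_\chi)\sim(P',\lambda_{\chi'})$. Since $P\in\mathcal P$, Theorem~1 provides the required diffeomorphism between $M$ and $M'$.

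For the ``only if'' direction, suppose $M$ and $M'$ are diffeomorphic. As $P\in\mathcal P$, Theorem~1 forces $(P,\lambda_\chi)\sim(P',\lambda_{\chi'})$, which by definition unpacks into $P\simeq P'$ together with $\lambda_\chi\sim\lambda_{\chi'}$. Transporting $\chi'$ along the combinatorial equivalence and applying the reverse implication of Proposition~2 yields $\chi\sim\chi'$. In particular $P'\simeq P\in\mathcal P$, so $P'$ lies in $\mathcal P$ as well, consistently with the symmetry of the conclusion.

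The one point demanding care --- and what I expect to be the main, if modest, obstacle --- is the bookkeeping around this combinatorial equivalence. The relations $\chi\sim\chi'$ and $\lambda_\chi\sim\lambda_{\chi'}$ are defined a priori for colourings and characteristic functions on a single polytope, whereas here $\chi$ is a colouring of $P$ and $\chi'$ of $P'$. One must check that the $S_4$-action on colourings and the $\mathrm{GL}_3(\mathbb Z)$-action (respectively $\mathrm{GL}_3(\mathbb Z_2)$-action in the small-cover case) on characteristic functions are compatible with transport along the chosen equivalence, so that Proposition~2 applies after the facet sets are identified. Once this is recorded, no computation remains and the theorem follows formally.
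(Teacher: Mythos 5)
Your proof is correct and follows exactly the paper's own argument: both directions reduce to Theorem~1 combined with Proposition~2, with the characteristic-pair equivalence as the intermediary. The extra remark about transporting $\chi'$ along the combinatorial equivalence is a sensible bit of bookkeeping that the paper leaves implicit, but it does not change the route.
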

\begin{proof}
If $P\simeq P'$ and $\chi\sim\chi'$, then $\lambda_\chi\sim\lambda_{\chi'}$, by Proposition~2. Then the pairs $(P,\lambda_\chi)$ and $(P',\lambda_{\chi'})$ are equivalent, so $M$ and $M'$ are diffeomorphic.

Conversely, if $M$ and $M'$ are diffeomorphic, then $P\simeq P'$ and $\lambda_\chi\sim\lambda_{\chi'}$, by Theorem~1. Therefore, $\chi\sim\chi'$, by Proposition~2.
\end{proof}

We say that a characteristic function $\lambda\colon \mathcal F\to\mathbb Z^3$ \emph{is defined by a $4$-colouring~$\chi$} if $\lambda(F)=\lambda(F')$ whenever $\chi(F)=\chi(F')$. The image of such a characteristic function consists of $4$ vectors. Examples are $\lambda(\chi,\mb a_1,\mb a_2,\mb a_3,\varepsilon_1,\varepsilon_2,\varepsilon_3)$ and $\lambda_\chi$. A regular $4$-colouring of a simple $3$-polytope is \emph{complete} if for any set of three different colours there exists a vertex whose incident facets have these colours.

\begin{proposition}
Let $\chi$ be a complete $4$-colouring. Then any characteristic function $\lambda\colon \mathcal F\to\mathbb Z^3$ defined by~$\chi$ is equivalent to~$\lambda_\chi$.
\end{proposition}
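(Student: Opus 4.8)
The plan is to use completeness to control the combinatorial type of the image of~$\lambda$, and then to reduce to the normal form by an explicit change of basis followed by an appeal to Proposition~1. Since $\lambda$ is defined by~$\chi$, its image consists of exactly four vectors; set $\mb b_i:=\lambda(F)$ for any facet $F$ with $\chi(F)=i$, so that $\{\mb b_1,\mb b_2,\mb b_3,\mb b_4\}$ is the image of~$\lambda$. The first and crucial step is to observe that completeness forces \emph{every} triple among these four vectors to be a basis of~$\mathbb Z^3$: given any three distinct colours $\{i,j,k\}$, completeness provides a vertex whose three incident facets carry exactly these colours, and the defining condition on a characteristic function then says that $\mb b_i,\mb b_j,\mb b_k$ form a basis.

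Next I would apply the change of basis in~$\mathbb Z^3$ carrying $(\mb b_1,\mb b_2,\mb b_3)$ to the standard basis $(\mb e_1,\mb e_2,\mb e_3)$; up to this equivalence we may assume $\mb b_i=\mb e_i$ for $i=1,2,3$. It then remains only to pin down~$\mb b_4$. Writing $\mb b_4=c_1\mb e_1+c_2\mb e_2+c_3\mb e_3$ with $c_i\in\mathbb Z$, the basis conditions on the three triples $\{\mb e_1,\mb e_2,\mb b_4\}$, $\{\mb e_1,\mb e_3,\mb b_4\}$ and $\{\mb e_2,\mb e_3,\mb b_4\}$ become three $3\times3$ determinants equal to~$\pm1$, which evaluate (up to sign) to $c_3$, $c_2$ and $c_1$ respectively. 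Hence each coordinate satisfies $c_i=\varepsilon_i$ with $\varepsilon_i=\pm1$, so that $\mb b_4=\varepsilon_1\mb e_1+\varepsilon_2\mb e_2+\varepsilon_3\mb e_3$.

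At this point $\lambda$ has been brought into the form $\lambda(\chi,\mb e_1,\mb e_2,\mb e_3,\varepsilon_1,\varepsilon_2,\varepsilon_3)$, and Proposition~1 immediately gives $\lambda\sim\lambda_\chi$, as required. I expect the only substantive point to be the first step: recognising that completeness is precisely what guarantees that all four triples of image vectors are simultaneously bases. Without it, some triple of colours might never occur at a vertex, the corresponding determinant constraint would disappear, and $\mb b_4$ could then have a coordinate different from~$\pm1$; thus completeness is genuinely essential rather than a mere convenience, and it is the hypothesis that makes the linear algebra of the second step go through.
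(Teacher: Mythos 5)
Your proof is correct. The paper states this proposition without giving a proof, but your argument --- completeness forces all four triples of image vectors to be bases of $\mathbb Z^3$, a change of basis reduces $\mb b_1,\mb b_2,\mb b_3$ to $\mb e_1,\mb e_2,\mb e_3$, the three remaining determinant conditions pin each coordinate of $\mb b_4$ to $\pm1$, and Proposition~1 finishes --- is evidently the intended one: the paper's subsequent counterexample $\lambda_{\chi,k}$, whose fourth vector $\mb e_1+\mb e_2+k\mb e_3$ arises precisely because the colour triple $\{1,2,4\}$ is not realised at any vertex, is exactly the failure mode your determinant analysis isolates.
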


Note that it is necessary that the $4$-colouring is complete. For example, assume that there is no vertex with the combination of colours $\{1,2,4\}$ for the incident facets. Then for each $k\in\mathbb Z$ consider the characteristic function $\lambda_{\chi,k}$ given by 
\[
  \lambda_{\chi,k}(F)=\begin{cases}
     \mb e_i&\text{if }\chi(F)=i,\quad i=1,2,3,\\
     \mb e_1+\mb e_2+k\mb e_3
     &\text{if }\chi(F)=4.
  \end{cases}
\]
Then $\lambda_{\chi,k}\not\sim\lambda_\chi$ for $k\ne\pm1$  (and $\lambda_{\chi,0}\not\sim\lambda_{\chi,1}=\lambda_\chi$ over $\mathbb Z_2$).

The class $\mathcal P$ contains all \emph{fullerenes}, that is, simple $3$-polytopes with only pentagonal and hexagonal facets~\cite{bu-er}. If a fullerene has two adjacent pentagons, then all four combinations of three colours are realised in the vertices of these two pentagons, so any $4$-colouring of such a fullerene is complete. For fullerenes without adjacent pentagons (\emph{IPR-fullerenes}) there exist non-complete $4$-colourings~$\chi$. It is easy to see that the corresponding hyperbolic $3$-manifolds $M(P,\lambda_{\chi,0})$ are non-orientable, unlike $M(P,\lambda_\chi)$.

\end{document}